\newtheorem{thm}{Theorem}[section]
\newtheorem{lemma}[thm]{Lemma}
\renewcommand{\proofname}{Proof}
\newtheorem{proposition}[thm]{Proposition}
\theoremstyle{definition}
\newtheorem{definition}[thm]{Definition}
\def\eps{\varepsilon}
\def\mult{\operatorname{mult}}
\def\RR{{\mathbb R}}
\def\ZZ{{\mathbb Z}}
\def\PP{{\mathbb P}}
\def\O{{\mathcal O}}
\def\F{{\mathcal F}}
\def\Pic{\operatorname{Pic}}
\def\ag{\`a}
\begin{document}

\title[Seshadri constants of $K3$ surfaces]{ 
Seshadri constants of $K3$ surfaces of degrees $6$ and $8$}
\author{Concettina Galati }
\address{Dipartimento di Matematica\\
 Universit\ag\, della Calabria\\
via P. Bucci, cubo 31B\\
87036 Arcavacata di Rende (CS), Italy. }
\email{galati@mat.unical.it }
\author{Andreas Leopold Knutsen}
\address{Department of Mathematics, University of Bergen, Johs. Brunsgt. 12, N-5008 Bergen, Norway}
\email{andreas.knutsen@math.uib.no}

\thanks{The second author thanks
the Department of Mathematics at the University of Calabria for its kind hospitality. The first author was supported by GNSAGA of INdAM
and by the PRIN 2008 {\it Geometria delle variet\ag\,  algebriche e dei loro spazi di moduli}, co-financed by MIUR}

\subjclass{14B05, 14B07,  14C17, 14C20, 14J28}

\keywords{Seshadri constants, $K3$ surfaces, deformations}


\dedicatory{}

\commby{}


\begin{abstract}
We compute Seshadri constants $\eps(X):= \eps(\O_X(1))$ on $K3$ surfaces $X$ of degrees $6$ and $8$. 
Moreover, more generally, we prove that if $X$ is any embedded  $K3$ surface 
of degree $2r-2 \geq 8$ in $\PP^r$ not containing lines, then $1 < \eps(X) <2$ if and only if the 
homogeneous ideal of $X$ is not generated by only quadrics 
(in which case $\eps(X)=\frac{3}{2}$).
\end{abstract}


\maketitle

\section{Introduction and results} \label{sec:intro}

In the past couple of decades there has been considerable interest in studying the local positivity of nef line bundles on algebraic varieties. Demailly \cite{de} introduced Seshadri constants to capture the concept of local positivity.
Let $X$ be a smooth projective variety and $L$ a nef line bundle
on $X$. For a point $x\in X$ the real number
\[\eps(L,x):=\inf_{C \ni x}\frac{L. C}{\mult_xC}\]
 is the \emph{Seshadri constant of $L$ at $x$}. Here the
   infimum is taken over all curves $C$ passing through
   $x$ and it is easily seen that one can restrict to considering only irreducible curves. Equivalently,
\[\eps(L,x):=\sup\{\eps \in \RR \; | \; f^*L-\eps E \; \; \mbox{is nef}\},\]
where $f:\widetilde{X} \to X$ is the blow up at $x$ and $E \subset \widetilde{X}$ is the exceptional divisor. 

The \emph{(global) Seshadri constant of $L$} is defined as 
\[ \eps(L):=\inf_{x \in X} \eps(L,x). \]

One has $\eps(L) >0$ if and only if $L$ is ample, by Seshadri's criterion. Moreover, 
$\eps(L) \leq \sqrt[n]{L^n}$, where $n:=\dim X$, by Kleiman's theorem \cite[Rem.~1.8]{EKL}.

Using standard terminology, we say that a curve $C$ is a {\it Seshadri curve (of $L$)} if $C$ computes 
$\eps(L)$, that is, if $\eps(L)=\frac{L. C}{\mult_xC}$ for a point $x\in C$.

We refer to \cite{ba-sur,PAG,prim}, to mention a few, for accounts on Seshadri constants and the development in the research on them.

One of the very subtle points about Seshadri constants is that their values are only known in few cases. For instance, all known examples are rational and it is not even known whether  Seshadri constants are always rational or not. 

If $X$ is a $K3$ surface and $L$ is ample, then the following is known:
\begin{itemize}
\item If $L$ is not globally generated, then $\eps(L)=\frac{1}{2}$. Otherwise, $\eps(L)\geq 1$
\cite[Prop.~3.1]{BDS}. (Indeed, by the nowadays classical results of Saint-Donat \cite{SD}, an ample line bundle $L$ on a $K3$ is not globally generated if and only if there is an elliptic pencil $|E|$ such that $E.L=1$. In this case, the Seshadri curves are the curves in $|E|$ with a double point.)
\item If $L$ is globally generated but not very ample, then $\eps(L)=1$ or $2$. This also follows from \cite{SD} and is probably well-known to the experts, but we give a proof in Proposition \ref{prop:notva} in \S\;\ref{sec:proof} for lack of a reference.  
\item If $L$ is very ample, then $\eps(L)=1$ if and only if $X$ embedded by $|L|$ contains a line \cite[Thm.~2.1(a)]{ba-sur}. (This holds on any surface.) Moreover, it is well-known that $K3$ surfaces containing a line form a codimension one subset in the moduli space of polarized
$K3$ surfaces of fixed degree.
\item If $L$ is very ample with $L^2=4$, that is, $X$ embedded by $|L|$ is a smooth quartic surface in $\PP^3$, then $\eps(L)=1$, $\frac{4}{3}$ or $2$ and all three cases occur \cite{ba-qua}: $\eps(L)=1$ if and only if $X$ contains a line; $\eps(L)=\frac{4}{3}$ if and only if $X$ contains no line and contains a rational curve $C \in |L|$ with a triple point (which happens if and only if $X$ contains a point where the Hesse form vanishes); $\eps(L)=2$ in all other cases.
Furthermore, the two first cases occur on codimension one subsets in the space of quartic surfaces.
\item If $\Pic X \cong \ZZ[L]$ and $L^2$ is a square, then $\eps(L)=\sqrt{L^2}$ \cite{kn-ses}.
\end{itemize}

By the above, when studying Seshadri constants on a $K3$ surface $X$, one can without 
loss of generality assume that $L$ is very ample, whence that the complete linear system $|L|$
embeds $X$ as a surface of degree $L^2=2r-2$ in $\PP^r$, where $r:=\dim |L|$. We set $\eps(L)=\eps(X)$. Moreover, we can also assume that $X$ does not contain any lines, that is, that $\eps(X) >1$. 

In this note we will compute the Seshadri constant $\eps(X)$ of $X$ if $L^2=6$ or $8$.
It is well-known that the homogeneous ideal of a $K3$ surface $X$ of degree $2r-2$ in $\PP^r$ is always generated by quadrics and cubics \cite{SD}. In particular, $X$ is a complete intersection of type $(2,3)$ in $\PP^4,$  that is, a complete intersection of a quadric and a cubic hypersurface if $L^2=6$. By contrast, if $L^2=8$, there are two types of projective models. In general $X$ is a complete intersection of type $(2,2,2)$ in $\PP^5$, that is, of three hyperquadrics. In addition, there is a codimension one subspace of polarized $K3$ surfaces of degree $8$ that are  a section of $|\O_Y(3)-\F|$ in a smooth three-dimensional rational normal scroll $Y$ in $\PP^5$, where $\F$ denotes the class of the $\PP^2$-fibers \cite{SD,JK}. The homogeneous ideal of these cannot be generated only by quadrics. In this latter case, $\eps(L)=\frac{3}{2}$, by the following more general result, which we will prove in \S\;\ref{sec:proof}.

\begin{proposition} \label{thm:notquad}
  Let $X \subset \PP^r$ be a  $K3$ surface of degree $2r-2$ not containing lines, $r \geq 5$.
Then $1< \eps(X) <2$ if and only if the homogeneous ideal of $X$ is not generated only by quadrics, in which case
$\eps(X) =\frac{3}{2}$ and the irreducible Seshadri curves are the irreducible curves with a double point in an elliptic pencil of degree $3$. 
\end{proposition}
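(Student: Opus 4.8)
The plan is to combine two ingredients: a characterization of when the homogeneous ideal fails to be generated by quadrics, phrased as a purely numerical condition on $\operatorname{NS}(X)$, and a Hodge-index estimate that controls all Seshadri ratios $\frac{L.C}{\mult_x C}$, where $L=\O_X(1)$. Throughout I use that $X$ contains no line, so that $\eps(X)>1$ and every irreducible curve $C$ has $L.C\geq 2$.

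First I would recall the relevant form of Saint-Donat's theorem: for $L$ very ample with $L^2=2r-2\geq 8$, the surface $X\subset\PP^r$ is projectively normal with ideal generated by quadrics and cubics, quadrics alone sufficing unless $X$ carries a \emph{degree-$3$ elliptic pencil}, i.e. a class $E$ with $E^2=0$ and $E.L=3$. The mechanism is that a smooth hyperplane section $C\in|L|$ has $K_C=L|_C$, hence is a canonical curve of genus $r$; since $X$ is projectively normal and $H^1(X,L)=0$, the quadrics through $X$ restrict onto those through $C$, and by the Enriques--Babbage--Petri theorem these fail to cut out $C$ exactly when $C$ is trigonal or a plane quintic. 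The trigonal case lifts to the degree-$3$ pencil above. The plane-quintic case would require a class $B$ with $B^2=2$, $B.L=5$ (forcing $r=6$), but then $L-2B$ satisfies $(L-2B)^2=-2$ and $(L-2B).L=0$, contradicting the ampleness of $L$. Thus, for very ample $L$, the sole obstruction to quadric generation is a degree-$3$ elliptic pencil.

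The core is the lower bound. For any irreducible curve $C$ with $m:=\mult_x C\geq 1$, the genus formula gives $1+\frac{1}{2}C^2=p_a(C)\geq\binom{m}{2}$, so $C^2\geq m^2-m-2$, while the Hodge index theorem yields $C^2\leq\frac{(L.C)^2}{L^2}\leq\frac{(L.C)^2}{8}$. Supposing $\frac{L.C}{m}<2$, I would feed these into one another: since $L.C\geq 2$ one has $m\geq 2$, and the combined estimate $(2r-2)(m^2-m-2)\leq (L.C)^2<4m^2$ forces $m=2$ for every $r\geq 5$, the borderline case $r=5$, $m=3$ being excluded separately by the same two inequalities. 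With $m=2$ one then gets $L.C=3$ and $0\leq C^2\leq\frac{9}{8}$, hence $C^2=0$; so $C$ is a double-point member of a degree-$3$ elliptic pencil and $\frac{L.C}{m}=\frac{3}{2}$. In particular no irreducible curve has ratio in $(1,\frac{3}{2})\cup(\frac{3}{2},2)$, whence $\eps(X)\geq\frac{3}{2}$ always, and $\eps(X)<2$ precisely when such a pencil exists.

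For the matching upper bound I would use the pencil itself. An irreducible class $E$ with $E^2=0$ is base-point free and defines an elliptic fibration $X\to\PP^1$ with no multiple fibers. Because $X$ contains no line and $E.L=3$, any reducible member would have a component of $L$-degree $1$, i.e. a line; hence every member is irreducible. Since $e(X)=24$ the fibration has singular members, each an irreducible rational curve with a single double point $x$, giving $\frac{L.C}{\mult_x C}=\frac{3}{2}$ and so $\eps(X)\leq\frac{3}{2}$. With the lower bound this yields $\eps(X)=\frac{3}{2}$, and the lower-bound analysis identifies these double-point members as exactly the Seshadri curves. Assembling this with the quadric dichotomy gives the stated equivalence. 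The main obstacle is twofold: first, pinning the numerical interplay in the lower bound so that it is tight at and only at the class $(C^2,L.C,m)=(0,3,2)$ uniformly in $r$; and second, the conceptual identification of ``not generated by quadrics'' with the lattice condition $E^2=0$, $E.L=3$, for which the exclusion of the plane-quintic exception by very ampleness is the delicate point. The geometric input guaranteeing an irreducible singular fibre (no lines together with $e(X)=24$) is the remaining step requiring care.
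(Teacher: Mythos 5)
Your proposal is correct and follows essentially the same route as the paper: the adjunction bound $C^2\geq m(m-1)-2$ combined with the Hodge index inequality $L^2C^2\leq (L.C)^2<4m^2$ to pin down $(m,L.C,C^2)=(2,3,0)$ as the only possibility for a ratio below $2$, together with Saint-Donat's characterization of quadric generation via the absence of a degree-$3$ elliptic pencil. The only (harmless) deviations are that you argue over all irreducible curves rather than invoking the existence of an irreducible Seshadri curve when $\eps<\sqrt{L^2}$, and that you spell out details the paper delegates to \cite{SD} and to the standard fact that an elliptic pencil on a $K3$ has singular members.
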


Now to our main result, whose proof is also contained
in  \S\;\ref{sec:proof}.

\begin{thm} \label{thm:deg68}
  Let $X$ be a  $K3$ surface not containing any lines. 

(a) If $X \subset \PP^4$ is a surface of type $(2,3)$, then one of the following cases occurs:
\begin{itemize}
\item[(a-i)]  $\eps(X)=2$, and there is a Seshadri curve that is a hyperplane section with a triple point;
\item[(a-ii)]  $\eps(X)=\frac{3}{2}$, and all irreducible Seshadri curves are irreducible curves with a double point in an elliptic pencil of degree $3$;
\end{itemize}

(b) If $X\subset \PP^5$ is a surface of type $(2,2,2)$, then one of the following cases occurs:
\begin{itemize}
\item[(b-i)]  $\eps(X)=\frac{8}{3}$,  and there is a Seshadri curve that is a hyperplane section with a triple point;
\item[(b-ii)]  $\eps(X)=\frac{5}{2}$, and all irreducible Seshadri curves are irreducible curves with a double point in an elliptic pencil of degree $5$;
\item[(b-iii)]  $\eps(X)=2$, and all irreducible Seshadri curves are either conics or  irreducible curves with a double point in an elliptic pencil of degree $4$.
\end{itemize}

Furthermore, cases (a-ii), (b-ii) and (b-iii) occur on sets of codimension one in the space of all such surfaces. 
\end{thm}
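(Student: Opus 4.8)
The plan is to separate two directions: a \emph{lower bound} for $\eps(X)$ obtained by a numerical classification of candidate Seshadri curves, and the \emph{existence} of the extremal curves together with the codimension count, where the deformation-theoretic work lies. For the lower bounds I start from the two basic constraints on an irreducible curve $C$ through a point $x$ with $\mult_x C = m$ and $d := L\cdot C$: the Hodge index inequality $C^2 \leq d^2/L^2$ and the adjunction/genus bound $C^2 \geq m^2 - m - 2$ (equivalently, the proper transform of $C$ on the blow-up at $x$ has nonnegative arithmetic genus). For part (a), where $r=4$ and Proposition~\ref{thm:notquad} does not apply, combining these with $\frac dm < \frac 32$ forces $m \leq 2$, and the only survivors are $d=1$ (a line, excluded) or $d=2,\,m=2$, which forces $C^2=0$ and hence an elliptic pencil of degree $2$; a nodal member of such a pencil would give $\eps(X) \leq 1$, contradicting the very ampleness and the no-line hypothesis. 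This yields $\eps(X) \geq \frac 32$ unconditionally. For part (b) the bound $\eps(X) \geq 2$ is immediate from Proposition~\ref{thm:notquad}, since the ideal of a $(2,2,2)$ surface is generated by quadrics and $X$ contains no lines.

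The heart of the lower-bound analysis is to pin down, for each admissible value of $\eps$ below $\sqrt{L^2}$, the numerical type $(d,m,C^2)$ of a putative Seshadri curve. Using $C^2 \leq d^2/L^2$, $C^2 \geq m^2-m-2$, and sub-maximality ($C^2<m^2$), the inequality $\frac dm < \eps_0$ (with $\eps_0 = 2$ for (a) and $\eps_0 = \frac 83$ for (b)) leaves only finitely many pairs $(d,m)$. I would dispose of every pair other than those listed in the theorem by examining the rank-two lattice $\langle L, C\rangle$: in each spurious case (for instance $d=5,\,m=3$ in part (a), and $d=7,\,m=3$ or $d=5,\,m=2,\,C^2=2$ in part (b)) the Gram determinant forces a small integral combination such as $L-C$ to be either a $(-2)$-class of $L$-degree $1$, i.e.\ a line, contradicting the hypothesis, or an elliptic class of $L$-degree at most $3$, whose nodal members (or, in degree $1$, failure of global generation) would violate the already-established lower bound or very ampleness. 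What survives are exactly the elliptic pencils of degree $3$ (for (a)), of degrees $4$ and $5$ (for (b)), together with the conic and triple-point-hyperplane-section classes; the corresponding ratios $\frac 32,\,2,\,\frac 52,\,\frac 83$ give the values in the statement, and the trichotomy follows by comparing them and checking, on each relevant rank-two lattice, that no class of smaller ratio is present.

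Finally I turn to the existence statements and the codimension count, which I expect to be the main obstacle. Here I need two kinds of singular Seshadri curves: (i) on the generic surface (Picard number one) a curve in $|L|$ with a triple point, realizing cases (a-i) and (b-i); its expected dimension is $\dim|L|-6+2$, namely $0$ for (a) and $1$ for (b), but producing an \emph{irreducible} such curve requires a genuine deformation argument rather than a dimension count; and (ii) on the Noether--Lefschetz locus where $\Pic X$ acquires the relevant extra class, a double-point member of the prescribed elliptic pencil (or a conic), realizing (a-ii), (b-ii) and (b-iii).

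The delicate step is the production of these singular curves, which I would handle by degenerating $X$ to a suitable special $K3$ surface on which the triple-point curves, respectively the nodal fibres of the elliptic pencils, can be written down explicitly and then deformed, invoking the semicontinuity of Seshadri constants in families to transport the resulting bounds to the general member. For the codimension claim, each required extra class imposes a single linear condition on periods, giving codimension exactly one; to conclude I must also verify that the generic member of the corresponding Noether--Lefschetz divisor has Picard lattice of rank exactly two with the stated Gram matrix and still carries the appropriate projective model (a $(2,3)$, respectively $(2,2,2)$, complete intersection containing no lines), which I would settle by appealing to Saint-Donat's classification of projective models of polarized $K3$ surfaces.
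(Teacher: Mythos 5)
Your proposal follows essentially the same route as the paper: an upper bound coming from irreducible hyperplane sections with a triple point on the general surface (transported to all surfaces by Oguiso's semicontinuity), a finite classification of potential sub-maximal irreducible Seshadri curves via the genus bound $C^2\geq m_x(m_x-1)-2$, the Hodge index theorem, and Saint-Donat's results applied to auxiliary classes such as $L-C$, $2L-C$, $C-L$, followed by a Noether--Lefschetz/lattice argument for realizing the surviving cases in codimension one. The one step you leave as an outline --- actually producing the irreducible triple-point curve in $|L|$ on the general degree-$6$ surface, which a dimension count does not give --- is precisely the paper's main technical input (Proposition \ref{existence}, proved by degenerating to a union of two rational normal scrolls meeting along an elliptic normal curve and deforming an explicit quadruple-point curve), so your proposed strategy there coincides with the paper's as well.
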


The proof  is based on existence results of curves with triple points on a $K3$ surface 
in \cite{ga} and in Section \ref{sec:existence} of this paper, combined with properties of curves on $K3$ surfaces. 
In \S\;\ref{sec:max} we make some comments on how far similar reasonings for curves with singularities of higher multiplicities would bring us.

\vspace{0.3cm}
\noindent {\bf Conventions.}
  We work over the field of complex numbers. 
We will assume familiarity with standard results on $K3$ surfaces, like the results in \cite{SD} (a brief summary of those can be found in \cite[\S\;2]{kn-sm}), and about deformations and families of $K3$ surfaces, as explained for instance in \cite{BPV,GH,ko}.

\section{Curves with a triple point on a $K3$ surface of degree $6$}\label{sec:existence}

The main ingredient in Theorem \ref{thm:deg68} is the existence of irreducible curves in
 the linear system $|\mathcal O_X(1)|$ with a singularity of multiplicity $3$ at a (special) point of $X,$
where $X$ is a general $K3$  surface of degree $2r-2$ in $\PP^r,$ with  $r=4,5.$ For $r=5$ this follows by 
\cite[Cor. 4.3]{ga}. The argument used in \cite[Section 4]{ga} does not apply to the case $r=4$ (see \cite[Rmk. 4.4]{ga}).
The existence of elliptic curves in $|\mathcal O_X(1)|$ 
with a triple point  will be proved in this section.  
We need to recall some classical deformation theory of $K3$  surfaces (cf. \cite{clm}). 

Let $R_1$ and $R_2,$ with $R_1\simeq R_2\simeq \mathbb F_1,$ be two general rational normal scrolls 
of degree $3$ in $\PP^4,$ generated by the union of the secants to divisors in two general $g^1_2$'s on 
an elliptic normal curve $E\subset\mathbb P^4$ of degree $5.$ Then $E=R_1\cap R_2$ 
and the intersection is transverse. Moreover, if $\sigma_i$ and $F_i$ are the two generators
of $\Pic R_i$, with $\sigma^{2}_i=-1$ and $F_i^2=0,$ we have that 
$|\mathcal O_{R_i}(1)|=|\mathcal O_{R_i}(\sigma_i+2F_i)|,$
for $i=1,2.$ By classical deformation theory of $K3$  surfaces (cf. \cite[Cor. 1, Thms. 1 and 2]{clm}
and related references, precisely, \cite[Rmk. 2.6]{F} and \cite[\S 2]{GH85}), we know that, no matter how we choose
$16$ general points $\{\xi_1, \ldots, \xi_{16}\}$ on $E,$ there exists a smooth family of surfaces 
$\mathcal X\to\mathbb A^1$ whose general fibre $\mathcal X_t$ is a smooth $K3$  surface  of degree $6$ in $\mathbb P^4$
such that $\Pic(\mathcal X_t)\simeq \mathbb Z[\mathcal O_{\mathcal X_t}(1)]$ and whose special fibre 
is $\mathcal X_0=R_1\cup \tilde R_2,$ where $\tilde R_2$ is the blowing-up of $R_2$ at $\{\xi_1, \ldots, \xi_{16}\}$
with exceptional divisors $E_1,\ldots,E_{16}.$ We want to obtain curves with a triple point in $|\mathcal O_{\mathcal X_t}(1)|$
as deformations of suitable curves in $|\mathcal O_{\mathcal X_0}(1)|.$

\begin{lemma} \label{lemma:pre}
Let $R=R_1\cup R_2\subset \PP^4$ be the union of two general rational normal scrolls as above. Then there exists
a unique curve $C=C_1\cup C_2\in |\mathcal O_R(1)|$, where $C_i\subset R_i$, $i=1,2$, such that both $C_i$'s have a node at the same point of $E$ with one branch tangent to $E$. 
\end{lemma}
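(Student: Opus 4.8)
The plan is to use the isomorphism between $|\mathcal{O}_R(1)|$ and the linear system cut out on $E$. First I would observe that $E$ is anticanonical on each scroll, $E\in|2\sigma_i+3F_i|=|-K_{R_i}|$, so that $\mathcal{O}_{R_i}(1)-E\sim-\sigma_i-F_i$ has no sections and the restriction $H^0(\mathcal{O}_{R_i}(1))\to H^0(\mathcal{O}_E(1))$ is an isomorphism (both sides have dimension $5$). Consequently a curve $C_i\in|\mathcal{O}_{R_i}(1)|$ is uniquely determined by its trace $\Delta_i:=C_i\cap E\in|\mathcal{O}_E(1)|$, and a pair $(C_1,C_2)$ glues to a curve $C=C_1\cup C_2\in|\mathcal{O}_R(1)|$ precisely when $\Delta_1=\Delta_2$ as divisors on $E$. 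This reduces the whole statement to producing a single divisor $\Delta\in|\mathcal{O}_E(1)|$ for which the two curves it determines both acquire a node at a common point of $E$ with one branch tangent to $E$.

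Next I would reinterpret the singularity. Since a member of $|\sigma_i+2F_i|$ has arithmetic genus $0$, it can carry a node only if it is reducible, and the only splitting producing a single node is $C_i=A_i\cup F_i'$ with $A_i\in|\sigma_i+F_i|$ a conic and $F_i'\in|F_i|$ a fibre, meeting transversally at the one point $A_i\cdot F_i'=1$. To place this node at a prescribed $p\in E$ with a branch tangent to $E$ I take $F_i'$ to be the (unique) fibre through $p$ and $A_i$ to be a conic through $p$ tangent to $E$ at $p$. The conics cut out on $E$ the complete system $|D_i|$, $D_i:=(\sigma_i+F_i)|_E$ of degree $3$, so the conics whose trace is $\ge 2p$ form the single point $|D_i-2p|$; hence $A_i=A_i(p)$ is unique. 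Writing $G_i:=F_i|_E$ of degree $2$, I record the traces $A_i(p)\cap E=2p+a_i(p)$ with $a_i(p)\sim D_i-2p$ and $F_i'\cap E=p+f_i(p)$ with $f_i(p)\sim G_i-p$, so that $C_i(p)$ has trace $\Delta_i(p)=3p+\rho_i(p)$, where $\rho_i(p)=a_i(p)+f_i(p)$ and $\rho_i(p)\sim\mathcal{O}_E(1)-3p$ for both $i$.

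The gluing condition $\Delta_1(p)=\Delta_2(p)$ with common node $p$ then becomes the equality of effective divisors $\rho_1(p)=\rho_2(p)$. Both have the same class $\mathcal{O}_E(1)-3p$, so matching them amounts to matching the two residual points. Here the hypothesis that the two $g^1_2$'s are distinct enters decisively: $G_1\not\sim G_2$ forbids $f_1(p)=f_2(p)$, so the only possible matching is the crossed one, $a_1(p)=f_2(p)$ and $a_2(p)=f_1(p)$. Passing to classes in $\Pic^1(E)\cong E$, the first reads $p\sim D_1-G_2$ and the second $p\sim D_2-G_1$; since $D_i+G_i\sim\mathcal{O}_E(1)$ for both $i$, these two conditions coincide and determine a unique point $p\in E$. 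Taking this $p$ yields $\rho_1(p)=\rho_2(p)$, hence a curve $C=C_1(p)\cup C_2(p)$ of the required type, and the argument shows it is the only one.

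The main obstacle is the genericity bookkeeping that guarantees the singularity is exactly as claimed and that no other configuration competes. I would check, using that $E$, the two scrolls and the two $g^1_2$'s are general, that at the distinguished point $p$ the conic $A_i(p)$ is irreducible and only simply tangent to $E$ (so $a_i(p)\ne p$, ruling out a flex or a worse singularity) and that the fibre $F_i'$ is transverse to $E$ at $p$ (so $f_i(p)\ne p$), which is exactly what makes $C_i(p)$ a node with one branch tangent to $E$. Finally I would exclude the variant in which the tangent branch is the fibre rather than the conic: a fibre is tangent to $E$ only at the four branch points of the corresponding $g^1_2$, and for general scrolls these finite sets are disjoint and incompatible with the class condition above, so such configurations do not glue. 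This reduces the proof to the clean group-law computation on $E$ carried out above.
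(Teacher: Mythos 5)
Your proof is correct and follows essentially the same route as the paper's: both reduce to the decomposition of $C_i$ as (fibre through $p$) $\cup$ (conic tangent to $E$ at $p$) and pin down $p$ as the unique point of the degree-one system $|\O_E(\sigma_1+F_1-F_2)|$, which is exactly your condition $p\sim D_1-G_2\sim D_2-G_1$. The only cosmetic differences are that you derive the splitting from $p_a(\sigma_i+2F_i)=0$ rather than from the intersection count $C_i.F_i=1<2$ forcing the fibre to be a component, and that you package existence and uniqueness together via the restriction isomorphism $H^0(\O_{R_i}(1))\cong H^0(\O_E(1))$; the genericity checks you defer at the end are the same ones the paper also dispatches briefly.
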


\begin{proof}
Assume that there exists a curve $C=C_1\cup C_2\in |\mathcal O_R(1)|$ as in the statement. Let $p \in E$ be the point where  both $C_i$'s have a node with one branch tangent to $E$. 
Then $C_i\subset R_i$ intersects
the unique divisor $D_i$ in $|F_i|$ passing through $p$ with multiplicity $2$. 
As $F_i.C_i=F_i.(\sigma_i+2F_i)=1$, we have $D_i\subset C_i.$ More precisely, we have  
 
\begin{enumerate}
\item $C_i=  D_i\cup L_i,$
 where $D_i\sim F_i$ and $L_i\sim \sigma_i+F_i,$ for $i=1,2$; \label{cond1}
\item $C\cap E=3p+q_1+q_2,$ where $p,q_1,q_{2}$ are distinct points satisfying
\begin{eqnarray*}
D_1\cap E\,\, (\mbox{resp.}\,\, D_2\cap E)& =& p+q_{2}\,\,(\mbox{resp.}\,\,  p+q_{1})\,\,\textrm{and}\\
L_1\cap E\,\,(\mbox{resp.}\,\, L_2\cap E ) &=& 2p+q_{1}\,\,
 (\mbox{resp.}\,\,2p+q_2). \nonumber
\end{eqnarray*}\label{cond2}
\end{enumerate}
\vspace{-0.6cm}
In particular, $C$ has nodes at $q_1$ and $q_2,$ a space quadruple point at $p$ as in the following figure and it is smooth
on $R\setminus E.$  
\begin{figure}[htbp] 
\includegraphics[width=6cm]{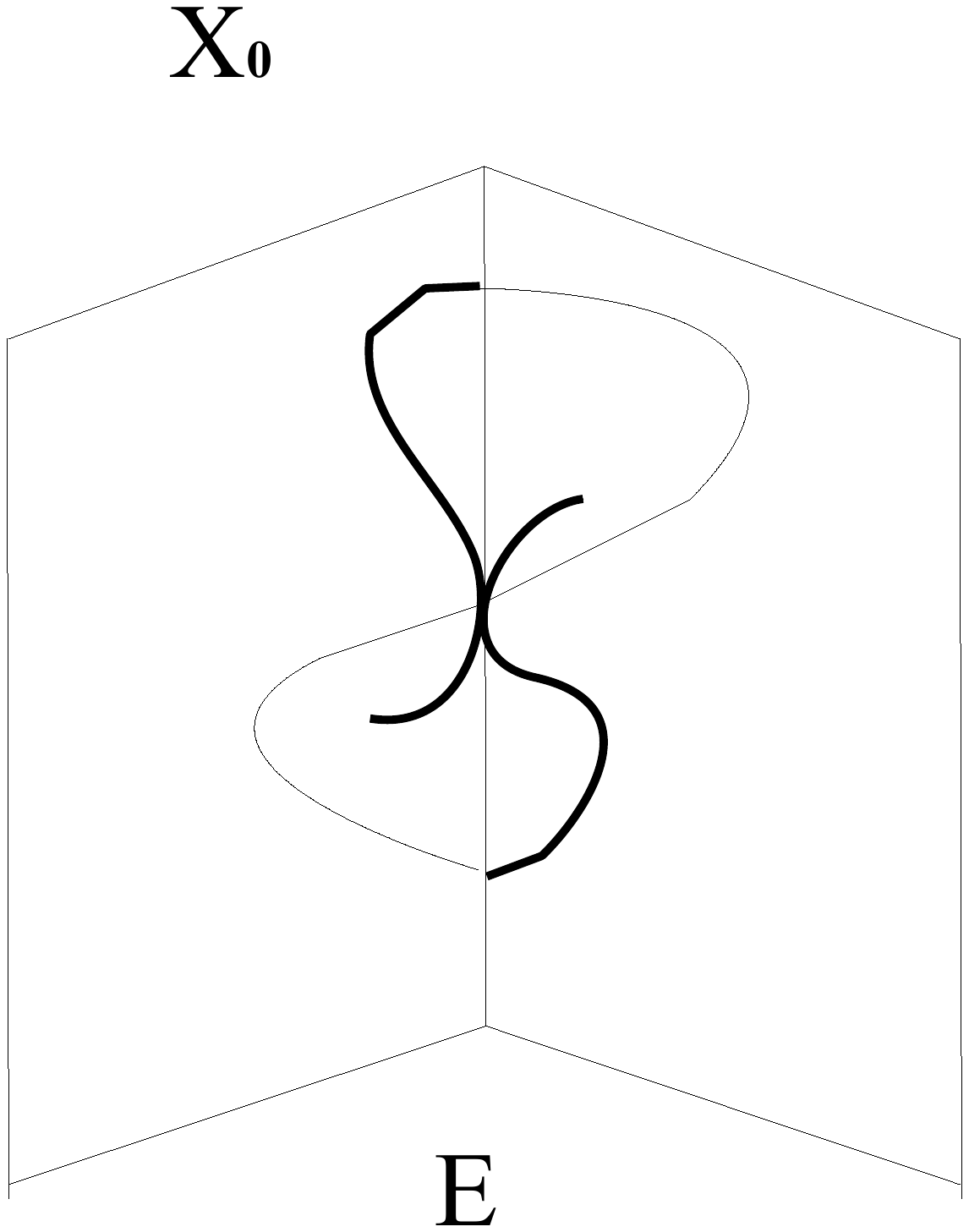}
\end{figure} 
It follows that the existence of a curve $C$ as in the statement is equivalent to the existence of a unique point 
$p\in E$ such that the unique divisor $L_1\sim \sigma_1+F_1$ on $R_1$ tangent to $E$ at $p$ satisfies $L_1\cap E=2p+q_1,$
where $q_1\neq p$ and $p+q_1\sim F_2$ on $E$. We claim that this is satisfied by the point $p$ that is the unique member in the degree one linear system
$|\O_E(\sigma_1+F_1-F_2)|$ on $E$. Indeed, if $D_2\sim F_2$ is the unique divisor on $R_2$
 passing through $p$, we may assume $D_2\cap E=p+q_1$ with $p\neq q_1$, by the generality of the linear system $|\O_E(F_2)|$ on $E$ (spanning the normal scroll $R_2$ in $\PP^4$).
 The unique divisor $L_1\in |\mathcal O_{R_1}(\sigma_1+F_1)|$ cutting $2p+q_1$ on $E$
 must be smooth by  a trivial argument. Let $D_1\in |\mathcal O_{R_1}(F_1)|$ be the unique divisor passing through $p$
 and let $p+q_2=D_1\cap E$, where $q_2$ is different from $p$ and $q_1$, because 
$D_1.L_1=1$. Since $D_1+L_1\in |\mathcal O_{R_1}(1)|,$ we have that $3p+q_1+q_2\in |\mathcal O_E(1)|$ and thus there 
 exists a unique smooth curve $L_2\sim \sigma_2+F_2$ on $R_2$ cutting $2p+q_2$ on $E.$ 
 This proves our claim, whence also the lemma.
\end{proof}

\begin{proposition}\label{existence}
Let $X$ be a general $K3$  surface of degree $6$ in $\PP^4$ such that $\Pic(X)\simeq\mathbb Z[\mathcal O_X(1)].$
Then in the linear system $|\mathcal O_X(1)|$ there exist finitely many elliptic curves with a singularity of multiplicity $3$.
\end{proposition}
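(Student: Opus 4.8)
The plan is to produce the required elliptic curves as flat limits, under the degeneration $\mathcal X \to \mathbb A^1$, of the reducible curve $C = C_1 \cup C_2 \in |\mathcal O_R(1)|$ constructed in Lemma \ref{lemma:pre}. Before running the degeneration I would record the numerics that organize everything. A hyperplane section of $X$ has arithmetic genus $4$ (since $\mathcal O_X(1)^2 = 6$), and a point of multiplicity $3$ has $\delta$-invariant $\geq \binom{3}{2}=3$, with equality exactly when it is an ordinary triple point; hence an irreducible curve in $|\mathcal O_X(1)|$ of geometric genus $1$ with a triple point is precisely an elliptic curve with an \emph{ordinary} triple point as its only singularity. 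Moreover, since $\Pic X \cong \mathbb Z[\mathcal O_X(1)]$ with $\mathcal O_X(1)$ a primitive ample generator, every member of $|\mathcal O_X(1)|$ is automatically irreducible and reduced (an effective decomposition would force a numerically trivial nonzero component, impossible on a $K3$), and the expected dimension of the locus of curves in $|\mathcal O_X(1)| \cong \mathbb P^4$ with a triple point is $4 - \binom{3+1}{2} + 2 = 0$. This already accounts for the claimed finiteness, so the real content is \emph{existence}.

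For existence, I would first check that $C_0 = C_1 \cup C_2$ is a genuine limit of curves in $|\mathcal O_{\mathcal X_t}(1)|$. The matching condition along the double curve holds by construction, namely $C_1 \cap E = C_2 \cap E = 3p + q_1 + q_2 \in |\mathcal O_E(1)|$ (verified in the proof of Lemma \ref{lemma:pre}), so $C_0$ is a Cartier divisor in $|\mathcal O_{\mathcal X_0}(1)|$ compatible with the node $E$. Using the vanishing $H^1(\mathcal X_0, \mathcal O_{\mathcal X_0}(1)) = 0$, the direct image $\pi_* \mathcal O_{\mathcal X}(1)$ is locally free of rank $5$, so $|\mathcal O_{\mathcal X_0}(1)|$ has the expected dimension $4$ and is the flat limit of $|\mathcal O_{\mathcal X_t}(1)|$; hence $C_0$ deforms to curves $C_t \in |\mathcal O_{\mathcal X_t}(1)|$. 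Here the $16$ general points $\xi_i$ enter only as the device from \cite{clm} that makes the total space smooth (they can be chosen away from $p, q_1, q_2$ and play no role in the analysis at $p$).

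The main obstacle is to show that a \emph{suitable} such deformation carries an ordinary triple point, i.e.\ that the space quadruple point of $C_0$ at $p$ is the flat limit of an ordinary triple point (note that a generic deformation of $C_0$ will not have one, so one must choose the surface cutting $C_0$ carefully). I would analyze this locally, in analytic coordinates $x,y,z$ on the total space with $\mathcal X_t = \{xy = t\}$, $E = \{x=y=0\}$, $R_1 = \{y=0\}$, $R_2 = \{x=0\}$, in which the four branches read $D_i = \{z=0\}$ (transverse to $E$) and the $L_i$ tangent to $E$ (so $L_1 = \{x = cz^2\}$ on $R_1$ and $L_2 = \{y = c'z^2\}$ on $R_2$). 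The two transverse branches $D_1, D_2$ glue across the neck into the single smooth branch $\{z=0\} \cap \mathcal X_t$, while the two tangent branches $L_1, L_2$ must account for the remaining two branches of the triple point; the crux is to exhibit an explicit one-parameter family of irreducible triple-point curves on $\mathcal X_t$ degenerating to $C_0$, and to check that the limiting triple point is ordinary (three distinct tangent directions) and is the only singularity of $C_t$, so that its geometric genus is exactly $1$. I expect that the tangency of both $L_i$ to $E$ arranged in Lemma \ref{lemma:pre} is precisely dictated by this requirement: it is the correct number of conditions for the limit of an ordinary triple point to be this quadruple configuration rather than an ordinary quadruple point, and making this computation precise is where the work lies.

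Finally, I would assemble the conclusion. Existence on the general fibre $\mathcal X_t$, together with the dimension count and the automatic irreducibility from $\Pic \mathcal X_t \cong \mathbb Z$, yields finitely many elliptic curves with a triple point in $|\mathcal O_{\mathcal X_t}(1)|$. Since the construction of Lemma \ref{lemma:pre} and the family $\mathcal X \to \mathbb A^1$ work for general scrolls $R_1, R_2$ and general points $\xi_i$, the fibre $\mathcal X_t$ is a general polarized $K3$ surface of degree $6$ with $\Pic(\mathcal X_t) \cong \mathbb Z[\mathcal O_{\mathcal X_t}(1)]$, which gives the statement for the general such surface.
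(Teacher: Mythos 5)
Your setup is sound and follows the paper's strategy up to the point where the real work begins: you correctly reduce to deforming the curve $C=C_1\cup C_2$ of Lemma \ref{lemma:pre}, with its space quadruple point at $p$, into a curve with a triple point on a nearby smooth fibre $\mathcal X_t$. But that is precisely the step you do not carry out --- you write that ``making this computation precise is where the work lies'' and propose to exhibit an explicit one-parameter family of triple-point curves degenerating to $C_0$, which is left entirely open. The paper does not produce such a family explicitly; it argues by a dimension count in the versal deformation space $T^1_{C,p}\cong\CC^7$ of the quadruple point. The key input is the \emph{uniqueness} in Lemma \ref{lemma:pre}: every analytically equisingular deformation of the space singularity must stay inside $\mathcal X_0$, and $C$ is the unique such curve, so the versal map $\phi_p\colon U_p\to T^1_{C,p}$ from a neighborhood of $[C]$ in the relative Hilbert scheme has finite fibre over $\underline 0$, hence is finite, and its image is an irreducible $5$-dimensional subvariety of $\CC^7$ containing the $4$-dimensional locus $\{b_1=b_2=b_3=0\}$ coming from $|\O_{\mathcal X_0}(1)|$. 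On the other hand the closure $\mathcal T$ of the locus of ordinary triple points has every component of codimension at most $4$ (four conditions on a plane curve after eliminating $x$ or $y$ from the local equations), and $\mathcal T$ meets $\{b_2=b_3=0\}$ only along a curve through the origin; hence $\dim\bigl(\phi_p(U_p)\cap\mathcal T\bigr)\geq 5+3-7=1$ while $\phi_p\bigl(U_p\cap|\O_{\mathcal X_0}(1)|\bigr)\cap \mathcal T=\{\underline 0\}$, so there are points of $\phi_p(U_p)\cap\mathcal T$ lying over $t\neq 0$, which by versality give the desired curves. Without this (or some equivalent) argument your proof has no content at the decisive step, and the explicit-family route you sketch is considerably harder than what the paper actually does.

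Two smaller points. Finiteness does not follow from the expected dimension of the triple-point locus being $0$; an expected-dimension count gives only a lower bound on the dimension of a nonempty locus, not an upper bound, and the paper instead quotes \cite[Lemma 3.1]{C} together with the observation that $\Pic\mathcal X_t\cong\ZZ[\O_{\mathcal X_t}(1)]$ rules out multiplicities greater than $3$. Also, the reason Lemma \ref{lemma:pre} establishes \emph{uniqueness} of $C$, not merely existence, is exactly to feed the finiteness of $\phi_p$ above; in your outline that uniqueness plays no role, which is a symptom of the missing mechanism.
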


\begin{proof}
Let $R_1$ and $R_2$ and $\mathcal X\to\mathbb A^1$ as above. In the
linear system $|\mathcal O_{\mathcal X_t}(1)|$ on the general fibre $\mathcal X_t$ of $\mathcal X\to\mathbb A^1$
there are no curves with a singularity of multiplicity bigger than $3$, because every such curve must be reducible
and $\Pic {\mathcal X_t} \cong \mathbb{Z}[\O_{\mathcal X_t}(1)]$. Moreover, if there exist curves with a triple point, then they are finitely many
by \cite[Lemma 3.1]{C}. We want to prove their existence.
By Lemma \ref{lemma:pre}, there exists a unique curve
$C=C_1\cup C_2\in |\mathcal O_{\mathcal X_0}(1)|$ with a space quadruple point 
at a point $p\in E$
as in the figure on the previous page, and $C$ must satisfy conditions \eqref{cond1} and \eqref{cond2}.
We will show that $C$ may be deformed to a curve on $\mathcal X_t$ in such a way that
the non-planar quadruple point of $C$ at $p$ is deformed to a triple point.  
The scheme parametrizing deformations of $C$ in $\mathcal X$ is an irreducible component 
$\mathcal H$ of the relative Hilbert scheme $\mathcal H^{\mathcal X|\mathbb A^1}$ of the family $\mathcal X.$
The scheme parametrizing deformations of the singularity of $C$ at $p$ is the versal deformation space
$T^1_{C,p}.$ By \cite[Lemma 3.2]{ga}, we may choose local coordinates $x,y,z,t$ of $\mathcal X$ centered at $p$
such that $C$ is given by the equations
\begin{eqnarray}\label{eq: quadruple-point}
\left\{\begin{array}{l}
(y+x+z^2)z=0,\\
xy=t, \\
t=0\\
\end{array}\right.
\end{eqnarray}
at $p$. Moreover, by  \cite[(10) in proof of Lemma 3.2]{ga},
the versal deformation family $\mathcal C_p\to T^1_{C,p}$ is given by the equations
\begin{eqnarray}\label{versal_family}
\left\{\begin{array}{l}
(y+x+z^2)z+a_1+a_2x+a_3y+a_4z=0,\\
xy+b_1+b_2z+b_3z^2=0,\\
\end{array}\right.
\end{eqnarray}
where $(a_1,\ldots,a_4,b_1,b_2,b_3)=(\underline a,\underline b)$ are affine coordinates on $T^1_{C,p}.$
In particular, we have that $T^1_{C,p}\simeq \mathbb C^7.$
Denote by $\mathcal D\to\mathcal H$ the universal family parametrized by $\mathcal H$. By versality, there exist analytic
 neighborhoods $U_p$ of $[C]$ in $\mathcal H,$ $U_p^\prime$ of $p$ in $\mathcal D$ and  $V_p$ of $\underline 0$ in $T^1_{C,q}$
 and a map $\phi_p:U_p\to V_p$ 
so that the family
$\mathcal D|_{U_p}\cap U_p^\prime$ is isomorphic to the pull-back of $\mathcal C_p|_{V_p}$, with respect to $\phi_p$,
\begin{equation}\label{versalmap}
\xymatrix{
\mathcal C_p  \ar[d] & \mathcal C_p |_{V_p} \ar[d]  \ar@{_{(}->}[l]& 
U_p\times_{V_p}\mathcal C_p |_{V_p}  \ar[l] \ar[r]^{\hspace{0.5cm}\simeq} \ar[dr] & \mathcal D|_{U_p}\cap U_p^\prime
\ar@{^{(}->}[r] \ar[d] & \mathcal D  \ar[d] \\
T^1_{C,p} &  V_p \ar@{_{(}->}[l] & & \ar[ll]_{\phi_p} U_p \ar@{^{(}->}[r] & \mathcal H.}
\end{equation} 
 
We need to describe the image $\phi_p(U_p)\subset V_p.$ First observe that
the map $\phi_p$ is finite. To see this, observe that the fibre $\phi_p^{-1}(\underline 0)$
over $\underline 0$ is the analytically equisingular deformation locus of $[C]$ in $U_p.$
Since $C$ has a space singularity, all equisingular deformations of $C$ are trivially contained in $\mathcal X_0.$
By the previous lemma, we deduce that $\phi_p^{-1}(\underline 0)$ consists of the unique point $[C],$ possibly with multiplicity.
 Thus $\phi_p(U_p)\subset T^1_{C,p}$ is an irreducible analytic subvariety of dimension 
$5=\dim|\mathcal O_{\mathcal X}(1)|=\dim|\mathcal O_{\PP^4}(1)|+1,$
containing $\phi_p(U_p\cap |\mathcal O_{\mathcal X_0}(1)|)$
as a codimension $1$ subvariety.
Moreover, using that the dimension of $\phi_p(U_p\cap |\mathcal O_{\mathcal X_0}(1)|)$ is $4$ and that it
parametrizes all deformations of the curve corresponding to $\phi_p([C])$ on the surface $xy=0,$ we have that  $\phi_p(U_p\cap |\mathcal O_{\mathcal X_0}(1)|)$
 is given by the equations $b_1= b_2 = b_3=0.$
We denote by $\mathcal T$ the subvariety of $T^1_{C,p}$ defined as the Zariski closure of 
the locus of points 
$(\underline a,\underline b)$ that correspond to curves with
an ordinary singularity of multiplicity $3$ at a point $(x_0,y_0,z_0)\neq (0,0,z_0)$ of a smooth surface $xy+b_1+b_2z+b_3z^2=0.$
By \cite[proof of Thm. 3.9]{ga} we know that $\mathcal T$ is nonempty and  intersects the  
locus $b_2=b_3=0$ along the curve $\gamma:a_1=a_2=a_3=b_2=b_3=4b_1-a_4^2=0.$
In particular, we have that 
$\phi_p(U_p\cap |\mathcal O_{\mathcal X_0}(1)|)\cap \mathcal T=\underline 0$.
It follows that $\dim(\phi_p(U_p)\cap \mathcal T) \leq 1$, and we claim that equality is attained.

To see this, we first note that every irreducible component of $\mathcal T$ has codimension at most $4$ in $T^1_{C,p}$. (Indeed, using that $(x_0,y_0)\neq (0,0)$, we can recover $x$ or $y$ from the second equation in  \eqref{versal_family}. Substituting into the first, we get the equation of a planar curve. Imposing that this curve has a triple point, one obtains four equations defining $\mathcal T$.) 
It follows that $\dim(\phi_p(U_p)\cap \mathcal T)\geq 5+3-7=1$, proving that
$\dim(\phi_p(U_p)\cap \mathcal T) = 1$, as desired.

  This finishes the proof of the proposition, by versality.
\end{proof}

\section{Computing Seshadri constants} \label{sec:proof}

We start with the following result mentioned in the introduction:

\begin{proposition} \label{prop:notva}
  Let $L$ be an ample, globally generated line bundle on a $K3$ surface $X$. Then $L$ is not very ample if and only if one of the following occurs:
  \begin{itemize}
  \item[(a)] $L^2=2$;
  \item[(b)] there is an elliptic pencil $|E|$ such that $E.L=2$;
   \item[(c)] $L \sim 2B$, with $B$ globally generated such that $B^2=2$. 
  \end{itemize}
Furthermore, we have $\eps(L)=1$ in cases (a)-(b) and $\eps(L)=2$ in case (c).
\end{proposition}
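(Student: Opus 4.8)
The plan is to deduce the very-ampleness dichotomy from Saint-Donat's classification \cite{SD} and then to compute $\eps(L)$ case by case, constantly using the elementary bound $\eps(L)\geq 1$ valid for any ample, globally generated $L$ (either \cite[Prop.~3.1]{BDS} or directly: a member of $|L|$ through a point $x$ and not containing an irreducible curve $C$ meets $C$ in at least $\mult_x C$ points, so $L.C\geq \mult_x C$).

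For the equivalence, I would recall from \cite{SD} (cf. \cite[\S\;2]{kn-sm}) that an ample, globally generated $L$ on a $K3$ surface fails to be very ample precisely when $L^2=2$, or there is a smooth elliptic curve $E$ with $E.L=2$, or $L\sim 2B$ with $B^2=2$. The first two possibilities are exactly (a) and (b). In the third, I would argue that either $B$ is globally generated, giving (c), or $B$ has a base point, in which case Saint-Donat's base-point criterion yields an elliptic pencil $|F|$ with $F.B=1$, whence $F.L=2$ and we are in case (b). Conversely, each of (a)--(c) makes $\phi_{|L|}$ a $2:1$ morphism onto $\PP^2$, onto a scroll, or onto the Veronese surface, hence not an embedding. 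I would also record that (b) and (c) are mutually exclusive: an $E$ with $E.L=2$ has $E.B=1$, contradicting the global generation of $B$ required in (c). This is exactly what makes the two different Seshadri values compatible.

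Case (a): writing $\pi=\phi_{|L|}\colon X\to \PP^2$ for the double plane with branch sextic $B_6$, I would choose a line $\ell$ simply tangent to $B_6$ at a smooth point $q$. Then $\pi^*\ell\in|L|$ is irreducible with a node at the ramification point $x$ over $q$, so $\eps(L,x)\leq L.(\pi^*\ell)/\mult_x(\pi^*\ell)=L^2/2=1$, giving $\eps(L)=1$. Case (c) then follows formally from the homogeneity $\eps(2B,x)=2\,\eps(B,x)$ of Seshadri constants: since $B$ is ample (by Nakai--Moishezon, as $B^2>0$ and $B.C=\frac12 L.C>0$ for every curve $C$) and globally generated with $B^2=2$, case (a) gives $\eps(B)=1$, whence $\eps(L)=\eps(2B)=2\,\eps(B)=2$.

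Case (b) is where I expect the only real subtlety. The elliptic pencil $|E|$ defines a fibration $X\to\PP^1$ which, since $e(X)=24\neq 0$, has at least one singular fibre, and no multiple fibres because $K_X=0$. If $X$ contains a line $\ell$, then $L.\ell=1$ forces $\eps(L)=1$ at once. Otherwise every irreducible curve $\Gamma$ satisfies $L.\Gamma\geq 2$, and then every fibre of $|E|$ is irreducible: a reducible fibre $\sum m_i\Gamma_i$ would satisfy $2=E.L=\sum m_i\,(L.\Gamma_i)\geq 2\sum m_i\geq 4$, which is absurd. Hence a singular fibre $E_0$ is an irreducible rational curve with a node or cusp at some $x$, so $\mult_x E_0=2$ and $L.E_0=2$, yielding $\eps(L,x)\leq 1$ and thus $\eps(L)=1$. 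The main point to get right is precisely this exclusion of reducible and non-reduced fibres, which reduces case (b) to the existence of an irreducible singular member of $|E|$ carrying a double point.
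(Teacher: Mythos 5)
Your proof is correct and follows essentially the same route as the paper's: the Saint--Donat classification for the dichotomy, the lower bound $\eps(L)\geq 1$ from \cite[Prop.~3.1]{BDS}, the pullback of a tangent line to the branch sextic in case (a), a singular member of the elliptic pencil in case (b), and the homogeneity $\eps(2B)=2\eps(B)$ in case (c). The only substantive difference is that your irreducibility argument for the singular fibre in case (b) is unnecessary --- since the Seshadri infimum over all curves equals the infimum over irreducible ones, any singular member of $|E|$ (reducible or nonreduced included) with a point of multiplicity $\geq 2$ already gives $\eps(L,x)\leq 1$ --- while your extra observations (reducing a non-globally-generated $B$ in $L\sim 2B$ to case (b), and the disjointness of (b) and (c)) are welcome precisions that the paper leaves implicit in the citation of \cite{SD}.
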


\begin{proof}
  The characterization of $L$ as in (a)-(c) follows from \cite{SD}. Moreover, 
$\eps(L) \geq 1$ as $L$ is globally generated and ample \cite[Prop.~3.1]{BDS}.

 It is well-known that any elliptic pencil on a $K3$ surface contains singular members. Thus $\eps(L)=1$ in case (b). 

In case (a), the linear system $|L|$ defines a double cover of $X$ onto $\PP^2$ branched along a reduced plane sextic. The pullback $C$ of any line tangent to 
the sextic has a point of multiplicity two, whence $\eps(L) =1$. It also follows that $\eps(L)=2\eps(B)=2$ in case (c).
\end{proof}

We recall the following fact: If $\eps(L) < \sqrt{L^2}$, there is an irreducible  Seshadri curve  $C$ on $S$, that is,  an irreducible curve $C$ and a point $x \in C$ such that $\eps(L)=\frac{L.C}{\mult_xC}$, cf. \cite[Lemma 2.1]{og} or \cite[Lemma 3.1]{sz}. For any curve $C$ we set 
\[ \eps_{C,x}:= \frac{L.C}{\mult_xC} \; \; \mbox{and} \; \; m_x:=\mult_xC. \]
If $C$ is irreducible, then, as $p_a(C)-p_g(C) \geq \frac{1}{2}m_x(m_x-1)$, we obtain by adjunction on a $K3$ surface that
\begin{equation}
  \label{eq:1}
  C^2 \geq m_x(m_x-1)-2.
\end{equation}

We will  repeatedly make use of the following well-known fact:
Any linear system $|D|$ on a $K3$ surface satisfying $D^2 \geq -2$ and $D.H >0$ for some ample divisor $H$ is nonempty. If in addition $D^2 \geq 0$, then $|D|$ contains a singular member, that is, an element (possibly nonreduced or reducible) with a point of multiplicity $\geq 2$.

\renewcommand{\proofname}{Proof of Proposition \ref{thm:notquad}}

\begin{proof}
  Assume that $1< \eps(X)<2$ and set $L:=\O_X(1)$. As $2 < \sqrt{L^2}$, there  
exists an irreducible  Seshadri curve $C$ with $x \in C$ such that 
$\eps_{C,x}=\eps(X)$. Then $2 \leq m_x < L.C <2m_x$. Hence, by \eqref{eq:1} and the Hodge index theorem,
\[L^2 \Big(m_x(m_x-1)-2\Big) \leq L^2C^2  \leq (L.C)^2 < 4m_x^2. \]
Using the facts that $L^2 \geq 8$ and $m_x \geq 2$,
one easily verifies that the only possibility is
$(m_x,C^2,L.C)=(2,0,3)$, whence $\eps(X) =\frac{3}{2}$. Furthermore, the existence of $C$ implies that the homogeneous ideal of $X$ is not generated only by quadrics by \cite{SD}. 

Conversely, assume that the homogeneous ideal of $X$ is not generated only by quadrics. Then by \cite{SD} there exists an elliptic pencil $|E|$ such that $E.L=3$. As above, any member $C \in |E|$  with a point $x$ of multiplicity two satisfies $\eps_{C,x}=\frac{3}{2}$, whence
$\eps(L) \leq \frac{3}{2}$. Furthermore, $\eps(L)>1$ as $X$ does not contain lines.
\end{proof}

\renewcommand{\proofname}{Proof of Theorem \ref{thm:deg68}}

\begin{proof}
  We set $L:=\O_X(1)$. 

We first treat the case $X=(2,3) \subset \PP^4$, where $L^2=6$.

By Proposition \ref{existence}, on the general polarized $K3$ surface $(X,L)$ of degree $6$ there is an irreducible curve $C \in |L|$ with a triple point $x$. Hence $\eps(L) \leq \eps_{C,x}=\frac{6}{3}=2$. By lower semi-continuity of Seshadri constants in a family of smooth surfaces \cite[Cor.~5]{og}, we have $\eps(L) \leq 2$ for any $(X,L)$. Furthermore, if equality holds, there is a hyperplane section with a triple point that is a Seshadri curve. 

Assume now that $\eps(L)<2$. As $2 < \sqrt{6}$, there must exist an irreducible  Seshadri curve $C$ with $x \in C$ such that $\eps_{C,x}=\eps(L)<2$. From \eqref{eq:1} and the Hodge index theorem, we have
\begin{equation}
  \label{eq:2}
 6 \Big(m_x(m_x-1)-2\Big) \leq 6C^2  \leq (L.C)^2 < 4m_x^2. 
\end{equation}
If $m_x \geq 3$, this can only be satisfied if $(m_x,L.C,C^2)=(3,5,4)$. But then $D:=L-C$ satisfies $D^2=0$ and $L.D=1$, contradicting the fact that $L$ is globally generated, by \cite{SD}.

Since $m_x \geq 2$, as $X$ is assumed not to contain lines, we are left with the case
 $m_x =2$, where \eqref{eq:2} yields $C^2=0$ and $L.C=3$ (using \cite{SD} and the fact that $L$ is very ample). This means that $C$ is a singular member of an elliptic pencil. By e.g. \cite[Thm.~1.1]{kn-sm} there exist $K3$ surfaces of degree $6$ with the Picard group generated by the hyperplane section and an elliptic degree $3$ curve. This occurs on sets of codimension one in the space of all such surfaces, cf. \cite[Thm.~14]{ko} or \cite[p.~594]{GH}. Hence the theorem follows in this case.

We next treat the case  $X=(2,2,2) \subset \PP^5$, where $L^2=8$.

By \cite[Thm.~1.1]{ga}, on the general polarized $K3$ surface $(X,L)$ of degree $8$ there is an irreducible curve $C \in |L|$ with a triple point $x$. Hence $\eps(L) \leq \eps_{C,x}=\frac{8}{3}$. By lower semi-continuity again, $\eps(L) \leq \frac{8}{3}$ for any $(X,L)$, and, if equality holds, there is a hyperplane section with a triple point that is a Seshadri curve. 

Assume now that $\eps(L)<\frac{8}{3}$. As $\frac{8}{3} < \sqrt{8}$, there must exist an irreducible   Seshadri curve $C$ with $x \in C$ such that $\eps_{C,x}=\eps(L)<\frac{8}{3}$. From \eqref{eq:1} and the Hodge index theorem, 
\begin{equation}
  \label{eq:3}
 8 \Big(m_x(m_x-1)-2\Big) \leq 8C^2  \leq (L.C)^2 < \frac{64}{9}m_x^2. 
\end{equation}
One easily checks that this cannot be satisfied for $m_x \geq 9$.

If $m_x=8$, the only solution to \eqref{eq:3} is $(L.C,C^2)=(21,54)$. Set $D:=3L-C$. Then $(D^2,L.D)=(0,3)$, so any curve $D_0 \in |D|$ with a singular point $y$ satisfies $\eps_{D_0,y} \leq \frac{3}{2} < \frac{21}{8}= \eps_{C,x}=\eps(L)$, a contradiction.

If $m_x=7$, the only solution to \eqref{eq:3} is $(L.C,C^2)=(18,40)$. Set $D:=C-2L$. Then $(D^2,L.D)=(0,2)$, contradicting the very ampleness of $L$ by \cite{SD}.

If $m_x=6$, the only solution to \eqref{eq:3} is $(L.C,C^2)=(15,28)$. Set $D:=2L-C$. Then $(D^2,L.D)=(0,1)$, contradicting the global generation  of $L$ by \cite{SD}.

If $m_x=5$, the only solutions to \eqref{eq:3} are $(L.C,C^2)=(12,18)$, $(13,18)$ and $(13,20)$.
Set $D:=C-L$. Then $(D^2,L.D)=(2,4)$, $(0,5)$ and $(2,5)$, respectively. But then any curve $D_0 \in |D|$ with a singular point $y$ satisfies $\eps_{D_0,y} \leq \frac{L.D}{2} < \frac{L.C}{5}=\eps_{C,x}= \eps(L)$, a contradiction.

If $m_x=4$, the only solutions to \eqref{eq:3} are $(L.C,C^2)=(9,10)$, $(10,10)$ and 
$(10,12)$. As above, set $D:=C-L$. Then $(D^2,L.D)=(0,1)$, $(-2,2)$ and $(0,2)$, respectively. 
The first and third case contradict global generation and very ampleness, respectively, of $L$ by \cite{SD}. The middle case yields the contradiction $\eps(L) \leq L.D=2 < \frac{5}{2} =\eps_{C,x}$.

If $m_x=3$, the only solutions to \eqref{eq:3} are $(L.C,C^2)=(6,4)$, $(7,4)$ and 
$(7,6)$. Then $D:=L-C$ satisfies $(D^2,L.D)=(0,2)$, $(-2,1)$ and $(0,1)$, respectively, yielding the same contradictions as in the previous case.

If $m_x=2$ and $C^2 \geq 2$, the only solutions to \eqref{eq:3} are $(L.C,C^2)=(4,2)$ and 
$(5,2)$. In the first case the Hodge index theorem yields $L \sim 2C$, contradicting the very ampleness of $L$ by \cite{SD}. In the second case $D:=L-C$ satisfies $(D^2,L.D)=(0,3)$
contradicting the fact that the homogeneous ideal of $X$ is generated only by quadrics by \cite{SD}. 

If $m_x=2$ and $C^2=0$, then by \eqref{eq:3} we must have $L.C \leq 5$. Since $L$ is very ample and the homogeneous ideal of $X$ is generated only by quadrics, we must have $L.C \geq 4$ by \cite{SD}. Finally, if $m_x=1$, then \eqref{eq:3} yields $L.C \leq 2$, whence $C^2=-2$ by \cite{SD} as $L$ is very ample and $C$ is irreducible. Thus $C \cong \PP^1$. As $X$ is assumed not to contain lines, we must have $L.C=2$.

To summarize, we have proved that if $\eps(L) < \frac{8}{3}$, then the only possible irreducible Seshadri curves satisfy $(L.C,C^2,m_x)=(5,0,2),(4,0,2),(2,-2,1)$.

In any of the above cases, the divisors $L$ and $C$ are linearly independent in $\Pic X$. There are complete intersections $X$ of type $(2,2,2)$ in $\PP^5$ such that $\Pic X \cong \ZZ[L] \oplus \ZZ[C]$, with $L = \O_X(1)$ and $C$ as in any of the above cases, cf. \cite[Thms. 1.1 and 6.1]{kn-sm}. One can verify that in each of the  cases there are no divisors with the intersection properties of the two other cases,
whence $\eps(X)=\eps_{C,x}$. Therefore, cases (b-ii) and (b-iii) do occur, and they occur in codimension one subsets of the space of all such surfaces by \cite[Thm.~14]{ko} or \cite[p.~594]{GH}. This finishes the proof of the theorem.
\end{proof}

\section{Curves with points of maximal expected multiplicities} \label{sec:max}

In a nonempty linear system $|D|$ on a smooth projective surface $X$ one expects the existence of curves with a point of multiplicity $m$ for any $m$ satisfying
\begin{equation} \label{eq:exp}
  \dim |D|-\frac{1}{2}m(m+1)+2 \geq 0.
\end{equation}
We therefore make the following definition.

\begin{definition} \label{def:maxexp}
  A curve $C$ in a linear system $|D|$ on a smooth projective surface $X$ is said to have a point of {\it maximal expected multiplicity}  if it has a point of multiplicity $m_0$, where $m_0$ is the maximal integer $m$ satisfying \eqref{eq:exp}. 
\end{definition}

We note that the curves with a triple point constructed in \cite{ga} and in Proposition \ref{existence} that we used in the proof of Theorem \ref{thm:deg68} are curves in the hyperplane system $|L|$ with a point of maximal expected multiplicity. The reason why we were able to compute all Seshadri constants was the fact that these curves induced a Seshadri constant $<\sqrt{L^2}$. Any lower Seshadri constant must then be induced by an irreducible Seshadri curve and the rest of the proof is merely to find out the different intersection properties of these. The same approach would of course also work if the curves with a point of maximal expected multiplicity were elements of $|nL|$ for $n>1$, and also if the maximal expected multiplicity is $>3$.  The argument in \cite{ga} to prove the existence of curves with a triple point can in principle be generalized to curves with singularities of multiplicities $m>3$. 
One may therefore ask how far this procedure will reach if one generalizes \cite{ga}. The next proposition shows, however, that there is little to win: one can at the best use this procedure to compute Seshadri constants on $K3$ surfaces of degrees $14$ and $24$.

\renewcommand{\proofname}{Proof}

\begin{proposition}
  Let $L$ be a globally generated line bundle on a $K3$ surface $X$ such that $L^2 \geq 4$. Assume that there is a curve $C \in |nL|$, for some $n \in \ZZ^+$, with a point $x$ of maximal expected multiplicity $m$ such that $\eps_{C,x} < \sqrt{L^2}$. Then
\[ (L^2,n,m) \in \{(6,1,3),(6,2,5),(8,1,3),(14,1,4),(24,1,5)\}. \]
\end{proposition}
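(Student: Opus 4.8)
The plan is to convert both hypotheses into numerical inequalities in the single quantity $q:=n^2L^2$ and the integer $m$, and then observe that these force $q$ and $m$ into a finite range. First I would compute $\dim|nL|$. Since $L$ is globally generated and $L^2\geq 4>0$, the bundle $L$ is nef and big, hence so is $nL$ for every $n\geq 1$; as $K_X\cong\O_X$, Kawamata--Viehweg vanishing gives $h^i(nL)=0$ for $i>0$, so Riemann--Roch on the $K3$ surface yields $h^0(nL)=\tfrac12(nL)^2+2$ and thus $\dim|nL|=\tfrac12 n^2L^2+1$. Substituting this into the defining inequality \eqref{eq:exp} of the maximal expected multiplicity, the condition that $m$ satisfies \eqref{eq:exp} becomes
\[ m(m+1)\leq n^2L^2+6. \]
On the other hand, since $C\in|nL|$ we have $L.C=nL^2$ and $\mult_xC=m$, so $\eps_{C,x}=\tfrac{nL^2}{m}$; the Seshadri hypothesis $\eps_{C,x}<\sqrt{L^2}$ is therefore equivalent to $m>n\sqrt{L^2}$, i.e. to $m^2>n^2L^2$.

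Writing $q:=n^2L^2$, the two conditions read $m^2+m-6\leq q$ and $q\leq m^2-1$. Combining them gives $m^2+m-6\leq m^2-1$, that is $m\leq 5$; and since $q=n^2L^2\geq L^2\geq 4$, the inequality $m^2>q$ forces $m\geq 3$. Hence $m\in\{3,4,5\}$, and for each such $m$ the two inequalities pin $q$ down to the window $m^2+m-6\leq q\leq m^2-1$, namely $q\in\{6,7,8\}$ for $m=3$, $q\in\{14,15\}$ for $m=4$, and $q=24$ for $m=5$. This is the crux of the argument: once $m$ is bounded by $5$, everything reduces to a finite check.

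It remains to determine, for each admissible value of $q$, the pairs $(L^2,n)$ with $L^2\geq 4$ and $n\geq 1$ satisfying $n^2L^2=q$. Here I would use that the intersection form of a $K3$ surface is even, so $L^2$, and hence $q=n^2L^2$, is even; this immediately discards the odd values $q=7$ and $q=15$. The surviving values are $q\in\{6,8\}$ (for $m=3$), $q=14$ (for $m=4$) and $q=24$ (for $m=5$). Factoring each as $n^2L^2$ with $L^2\geq 4$ even gives $(L^2,n)=(6,1)$ and $(8,1)$ for $q=6,8$; $(14,1)$ for $q=14$; and $(24,1)$ together with $(6,2)$ for $q=24$. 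Reassembling the triples $(L^2,n,m)$ yields exactly the claimed set.

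The argument is essentially a finite computation, so there is no deep obstacle; the only points requiring care are the cohomology vanishing underlying the dimension formula for $\dim|nL|$ (which is where the global generation and $L^2\geq 4$ hypotheses enter) and the evenness of $L^2$, without which the spurious odd values $q=7,15$ would survive. I would also note that one need not invoke the \emph{maximality} of $m$ (i.e. the failure of \eqref{eq:exp} for $m+1$) at all: the inequality \eqref{eq:exp} for $m$ together with the Seshadri condition already cuts the list down to the stated five triples, though one may verify a posteriori that $m$ is indeed maximal in each case.
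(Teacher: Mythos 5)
Your proposal is correct and follows essentially the same route as the paper: both combine the inequality $m(m+1)\leq n^2L^2+6$ coming from \eqref{eq:exp} and $\dim|nL|=\tfrac12 n^2L^2+1$ with the Seshadri condition $n^2L^2<m^2$ to force $m\leq 5$, and then enumerate. Your write-up is merely more explicit about two points the paper leaves implicit, namely the vanishing behind the dimension formula and the evenness of $L^2$ needed to discard $n^2L^2=7,15$.
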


\begin{proof}
  From \eqref{eq:exp} we have
  \[
  \frac{1}{2}m(m+1)-2\leq \dim |nL|= \frac{1}{2}n^2L^2+1.  
  \]
By assumption, $\eps_{C,x}=\frac{L.C}{m} =\frac{nL^2}{m} < \sqrt{L^2}$, whence
$n^2L^2 <m^2$, so that $m \leq n^2L^2-m^2+6 \leq 5$.
As $L^2 \geq 4$, the only solutions are the ones listed.
\end{proof}

We remark that also in the case of quartic surfaces \cite{ba-qua}, curves with a point of maximal expected multiplicity are not enough to compute all Seshadri constants: in fact, in the 
case $\eps(L)=\frac{4}{3}$ the irreducible Seshadri curves are rational curves $C \in |L|$ with a triple point, whereas the maximal expected multiplicity is $2$. However, this does not happen on the general quartic, so hyperplane sections with a point of maximal expected multiplicity $2$ are Seshadri curves yielding $\eps(X)=2$ in the general case.

\end{document}